\newtheorem{Theorem}{Theorem}[section]
\newtheorem{cor}{Corollary}[section]
\title{The Warped Product of Hamiltonian Spaces}
\author{Hassan Attarchi$^1$, Morteza Mirmohammad Rezaii$^2$}
\begin{document}
\maketitle
\noindent
\begin{abstract}
In this paper, the geometric properties of warped product Hamiltonian spaces are studied. It is shown there is a close geometrical relation between a warped product Hamiltonian space and its base Hamiltonian manifolds. For example, it is proved that for non-constant warped function $f$, the Sasaki lifted metric $G$ of Hamiltonian warped product space is bundle-like for its vertical foliation if and only if based Hamiltonian spaces are pseudo-Riemannian manifolds.
\vspace{.5cm}

\noindent{\bf Keywords:} Warped Product, Hamiltonian Space, Bundle-like metric.\\
{\bf MSC 2010:} 54B10, 37J99.
\end{abstract}

\let\thefootnote\relax\footnotetext{$^{1,2}$Department of Mathematics and Computer Science, Amirkabir University of Technology (Tehran Polytechnic), Tehran, Iran.}
\footnote{$^1$Corresponding author, Phone number: +98 911 1751603\\ Email: \ hassan.attarchi@aut.ac.ir; hassan.attarchi@gmail.com}

%%%%%%%%%%%%%%%%%%%%%%%%%%%%%%%%%%%%%%%%%%%%%%%%%%%%%%%%%%%%%%%%%%%%%%%%%%%%%%%%%%%%%%%%%%%%%%%%%%%%%%%%%%%%
\section{Introduction}
The notion of warped product spaces was introduced to study manifolds with negative curvatures by Bishop and O'Neill~\cite{bi on}. Afterwards, warped product was used to model the standard space-time, especially in the neighborhood of stars and black holes~\cite{onil}. The notion of warped product Finslerian manifolds was initially introduced by Kozma~\cite{kozm} in $2001$. Recently, it was developed by one of the present authors~\cite{Rezaii,hoosh,rez}. In this work, warped product of Hamiltonian spaces is introduced and it is shown that these spaces obtain Hamiltonian structure as well. Moreover, some geometric properties of warped product Hamilton spaces such as its nonlinear connections are studied.

Lagrange space has been certified as an excellent model for some important problems in Relativity, Gauge Theory and Electromagnetism~\cite{anastasia,anastasia1}. The geometry of Lagrange spaces gives a model for both the gravitational and electromagnetic field. Moreover, this structure plays a fundamental role in study of the geometry of tangent bundle $TM$. The geometry of the cotangent bundle $T^*M$ and the tangent bundle $TM$ which follows the same outlines are related by Legendre transformation. From this duality, the geometry of a Hamiltonian space can be obtained from that of certain Lagrangian space and vice versa. Using this duality several important results in the Hamiltonian spaces can be obtained: the canonical nonlinear connection, the canonical metrical connection, etc. Therefore, the theory of Hamiltonian spaces has the same symmetry and beauty as Lagrangian geometry. Moreover, it gives a geometrical framework for the Hamiltonian theory of Mechanics or Physical fields. With respect to the importance of these spaces in Physical areas, present work is formed to develop the concept of warped product on Hamiltonian spaces. Aiming at our purpose, this paper is organized in the following way:

Let $(M,H)$ be the warped Hamiltonian space of Hamiltonian spaces $(M_1,H_1)$ and $(M_2,H_2)$. In section 2, the notion of warped product Hamiltonian spaces is presented and some natural geometrical properties of cotangent bundle for a warped manifold are given. In section 3, it is shown that $(M,H)$ is a Hamiltonian space and its canonical nonlinear connections are calculated as well. Moreover, Sasakian lifted metric $G$ on $T^*M$ is introduced. In section 4, the Levi-Civita connection of pseudo-Riemannian metric $G$ on $T^*M$ is calculated. Finally in section 5, we prove some theorems that they show close relation between the geometry of a warped product Hamiltonian manifolds and their base Hamiltonian spaces.

%%%%%%%%%%%%%%%%%%%%%%%%%%%%%%%%%%%%%%%%%%%%%%%%%%%%%%%%%%%%%%%%%%%%%%%%%%%%%%%%%%%%%%%%%%%%%%%%%%%%%%%%%%
\section{Preliminaries and Notations}
Here, a Hamiltonian space is a pair $(M,H)$, where $M$ is a real $n$-dimensional manifold and $H:T^*M\longrightarrow I\!\!R$ is a smooth function which its Hessian with respect to the cotangent bundle coordinate is a $d$-tensor field of type $(2,0)$ symmetric, nondegenerate and of constant signature on $T^*M\backslash\{0\}$. Let $\mathbb{H}_1^n=(M_1,H_1)$ and $\mathbb{H}_2^m=(M_2,H_2)$ be two Hamiltonian spaces with $\dim(\mathbb{H}_1^n)=n$ and $\dim(\mathbb{H}_2^m)=m$, respectively. The warped product of these spaces is denoted by $\mathbb{H}=(M,H)$ where:
\begin{equation}~\label{w p H}
M=M_1\times M_2\ \ \ \ \ and \ \ \ \ \ \ H=H_1+fH_2,
\end{equation}
for some smooth function $f:M_1\longrightarrow\mathbb{R}^+$. Then a coordinate system on $M$ is denoted by $\{(U\times V,\varphi\times\psi)\}$, where $\{(U,\varphi)\}$ and $\{(V,\psi)\}$ are coordinate systems on $M_1$ and $M_2$, respectively, such that each $\mathbf{x}=(x,z)\in M$ has the local expression $(x^i,z^{\alpha})$. It is notable that throughout the paper, the indices $\{i,j,k,...\}$ and $\{\alpha,\beta,\lambda,...\}$ are used for the ranges $1,...,n$ and $1,...,m$, respectively. Moreover, the canonical projections of $T^*M_1$ on $M_1$ and $T^*M_2$ on $M_2$ are denoted by $\pi_1$ and $\pi_2$, respectively. The fibre of cotangent bundle at $\mathbf{x}=(x,z)\in M$ is $T_{(x,z)}^*M=T_x^*M_1\oplus T_z^*M_2$, therefore $T^*M=T^*M_1\oplus T^*M_2$.

The induced coordinate system on $T^*M_1$ and $T^*M_2$ are $(x^i,p_i)$ and $(z^{\alpha},q_{\alpha})$, respectively, which the coordinate $p_i$ and $q_{\alpha}$ are called \emph{momentum variables}~\cite{miron}. The change of these coordinates on $T^*M_1$ and $T^*M_2$ are given by
\begin{equation}~\label{change}
\begin{array}{l}
\left\{
\begin{array}{l}
\tilde{x}^i=\tilde{x}^i(x^1,...,x^n),\cr \text{rank}\left(\frac{\partial\tilde{x}^i}{\partial x^j}\right)=n,\cr
\tilde{p}_i=\frac{\partial x^j}{\partial\tilde{x}^i}p_j
\end{array}
\right.\ \ \ \ \left\{
\begin{array}{l}
\tilde{z}^{\alpha}=\tilde{z}^{\alpha}(z^1,...,z^m),\cr \text{rank}\left(\frac{\partial\tilde{z}^{\alpha}}{\partial z^{\beta}}\right)=m,\cr
\tilde{q}_{\alpha}=\frac{\partial z^{\beta}}{\partial\tilde{z}^{\alpha}}q_{\beta}
\end{array}
\right.
\end{array}
\end{equation}
Let $(\mathbf{x},\mathbf{p})=(x,z,p,q)\in T^*M=T^*M_1\oplus T^*M_2$, the tangent space at $(\mathbf{x},\mathbf{p})$ to $T^*M$ is denoted by $T_{(\mathbf{x},\mathbf{p})}T^*M$ that is a $2(n+m)$-dimensional vector space. The natural basis induced on $T_{(\mathbf{x},\mathbf{p})}T^*M$ by local coordinate of $T^*M_1$ and $T^*M_2$ is $\{\frac{\partial}{\partial x^i},\frac{\partial}{\partial z^{\alpha}},\frac{\partial}{\partial p_i},\frac{\partial}{\partial q_{\alpha}}\}$. These coordinates are changed with respect to transformations~(\ref{change}) as follows:
\begin{equation}~\label{change1}
\left\{
\begin{array}{l}
\frac{\partial}{\partial x^i}=\frac{\partial\tilde{x}^j}{\partial x^i}\frac{\partial}{\partial\tilde{x}^j}+\frac{\partial\tilde{p}_j}{\partial x^i}\frac{\partial}{\partial\tilde{p}_j},\cr
\frac{\partial}{\partial z^{\alpha}}=\frac{\partial\tilde{z}^{\beta}}{\partial z^{\alpha}}\frac{\partial}{\partial\tilde{z}^{\beta}}+\frac{\partial\tilde{q}_{\beta}}{\partial z^{\alpha}}\frac{\partial}{\partial\tilde{q}_{\beta}},\cr
\frac{\partial}{\partial p_i}=\frac{\partial x^i}{\partial\tilde{x}^j}\frac{\partial}{\partial\tilde{p}_j},\cr
\frac{\partial}{\partial q_{\alpha}}=\frac{\partial z^{\alpha}}{\partial\tilde{z}^{\beta}}\frac{\partial}{\partial\tilde{q}_{\beta}}.
\end{array}
\right.
\end{equation}
In this work, the notations $\dot{\partial}^i$ and $\dot{\partial}^{\alpha}$ are used instead of $\frac{\partial}{\partial p_i}$ and $\frac{\partial}{\partial q_{\alpha}}$, respectively, similar to the notations in~\cite{miron}. The Jacobian matrix of transformations~(\ref{change1}) is
\begin{equation}~\label{cha.mat.}
\text{Jac}:=\left(%
\begin{array}{cccc}
  \frac{\partial\tilde{x}^j}{\partial x^i} & 0 & 0 & 0\\
  0 & \frac{\partial\tilde{z}^{\beta}}{\partial z^{\alpha}} & 0 & 0\\
  \frac{\partial\tilde{p}_j}{\partial x^i} & 0 & \frac{\partial x^i}{\partial\tilde{x}^j} & 0\\
  0 & \frac{\partial\tilde{q}_{\beta}}{\partial z^{\alpha}} & 0 & \frac{\partial z^{\alpha}}{\partial\tilde{z}^{\beta}}
\end{array}%
\right).
\end{equation}
It follows
$$\det(\text{Jac})=1.$$
By means of last equation, we have the following corollary.
\begin{cor}~\label{orient}
The manifold $T^*M=T^*M_1\oplus T^*M_2$ is orientable.
\end{cor}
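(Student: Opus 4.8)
The plan is to invoke the standard characterization of orientability in terms of transition functions: a smooth manifold is orientable precisely when it admits an atlas all of whose coordinate changes have everywhere-positive Jacobian determinant. Since the computation leading to $\det Jac=1$ has already been carried out, the corollary should follow almost immediately once the correct atlas on $T^*M$ is identified and checked to be a genuine covering atlas.

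First I would assemble the natural atlas on $T^*M$. The product coordinate charts $\{(U\times V,\varphi\times\psi)\}$ on $M$ induce, together with the momentum variables, charts with local coordinates $(x^i,z^{\alpha},p_i,q_{\alpha})$ on $T^*M=T^*M_1\oplus T^*M_2$. I would verify that these charts cover all of $T^*M$: every point $(\mathbf{x},\mathbf{p})$ lies over a product chart of $M$, and on each fibre the coordinates $(p_i,q_{\alpha})$ are well defined through the splitting $T_{(x,z)}^*M=T_x^*M_1\oplus T_z^*M_2$ recorded in the preliminaries. Hence this collection is a bona fide smooth atlas, not merely a family of pairwise-compatible charts.

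Next I would identify the transition maps of this atlas with the coordinate changes~(\ref{change1}), whose Jacobian matrix is exactly~(\ref{cha.mat.}). The key structural observation is that $Jac$ is block lower triangular: every block strictly above the diagonal vanishes, so its determinant factors as the product of the four diagonal blocks,
\[
\det Jac=\det\Bigl(\tfrac{\partial\tilde{x}^j}{\partial x^i}\Bigr)\,\det\Bigl(\tfrac{\partial\tilde{z}^{\beta}}{\partial z^{\alpha}}\Bigr)\,\det\Bigl(\tfrac{\partial x^i}{\partial\tilde{x}^j}\Bigr)\,\det\Bigl(\tfrac{\partial z^{\alpha}}{\partial\tilde{z}^{\beta}}\Bigr),
\]
and since the first and third factors are determinants of mutually inverse matrices (likewise the second and fourth), each pair multiplies to $1$, giving $\det Jac=1$ as already stated.

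The conclusion is then immediate: on every chart overlap the transition Jacobian has determinant $1>0$, so every coordinate change of the atlas is orientation-preserving, which is exactly the condition defining an orientation on $T^*M$. I do not expect any substantial obstacle here; the only point demanding a moment's care is confirming that the induced charts genuinely cover $T^*M$, and this is guaranteed by the fibrewise direct-sum decomposition $T^*M=T^*M_1\oplus T^*M_2$.
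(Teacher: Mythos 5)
Your proposal is correct and follows essentially the same route as the paper: both arguments rest on the induced atlas of coordinates $(x^i,z^{\alpha},p_i,q_{\alpha})$ on $T^*M$ and the observation that the transition Jacobian~(\ref{cha.mat.}), being block lower triangular with mutually inverse diagonal block pairs, has determinant $1>0$, which is the standard criterion for orientability. You merely make explicit the details the paper leaves implicit (coverage of the atlas and the block-triangular determinant computation), which is a faithful filling-in rather than a different proof.
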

Let $\bar{\partial}^a$ and $\frac{\partial}{\partial\mathbf{x}^a}$ be abbreviations for $\dot{\partial}^i\delta_i^a+\dot{\partial}^{\alpha}\delta_a^{\alpha+n}$ and $\frac{\partial}{\partial x^i}\delta_a^i+\frac{\partial}{\partial z^{\alpha}}\delta_{\alpha+n}^a$, respectively, where the indices $\{a,b,c,...\}$ are used for the range $1,...,n+m$. Throughout the paper, these notations and range of the indices are established.

We know that there are some natural structures live on the cotangent bundle $T^*M$, it would be interesting to present them on cotangent bundle of a warped product Hamiltonian space. First, the \emph{Liouville-Hamilton vector field} of $T^*M$ is given by:
\begin{equation}~\label{l h v}
C^*:=\mathbf{p}_a\bar{\partial}^a=p_i\dot{\partial}^i+q_{\alpha}\dot{\partial}^{\alpha}=C^*_1+C^*_2,
\end{equation}
where, $C_1^*$ and $C_2^*$ denote the Liouville-Hamilton vector fields of $T^*M_1$ and $T^*M_2$, respectively.

Next, the \emph{Liouville 1-form} $\theta$ on $T^*M$ is defined by:
\begin{equation}~\label{l 1 f}
\theta:=\mathbf{p}_ad\mathbf{x}^a=p_idx^i+q_{\alpha}dz^{\alpha}=\theta_1+\theta_2,
\end{equation}
where, $\theta_1$ and $\theta_2$ are Liouville 1-forms of $T^*M_1$ and $T^*M_2$, respectively.

And, the \emph{canonical symplectic structure} $\omega$ on $T^*M$ is defined by $\omega=d\theta$ and has local expression as follows:
\begin{equation}~\label{c s s}
\omega:=d\mathbf{p}_a\wedge d\mathbf{x}^a=dp_i\wedge dx^i+dq_{\alpha}\wedge dz^{\alpha}=\omega_1+\omega_2,
\end{equation}
where $\omega_1$ and $\omega_2$ are canonical symplectic structures of $T^*M_1$ and $T^*M_2$, respectively.

Finally, if the Poisson bracket on the cotangent bundles of $T^*M_1$, $T^*M_2$ and $T^*M$ are denoted by $\{.,.\}_1$, $\{.,.\}_2$ and $\{.,.\}$, respectively, then they are related as follows:
\begin{equation}~\label{poisson}
\{g,h\}=\bar{\partial}^ag\frac{\partial h}{\partial\mathbf{x}^a}-\bar{\partial}^ah\frac{\partial g}{\partial\mathbf{x}^a}=\{g,h\}_1+\{g,h\}_2,
\end{equation}
where $g,h\in C^{\infty}(T^*M)$.

The \emph{Hamilton vector field} of Hamiltonian function $H$ is denoted by $X_H$ and satisfied the following equation
$$\iota_{X_H}\omega=-dH.$$
Let $X_{H_1}$ and $X_{H_2}$ be Hamilton vector fields of the spaces $\mathbb{H}_1^n$ and $\mathbb{H}_2^m$, respectively, then the following theorem gives an expression of $X_H$.
\begin{Theorem}~\label{ham vec}
Suppose that $\mathbb{H}=(M,H)$ is warped product Hamiltonian space defined in~(\ref{w p H}). Then the Hamilton vector field of $\mathbb{H}$ is given by:
$$X_H=X_{H_1}+fX_{H_2}-H_2\frac{\partial f}{\partial x^i}\dot{\partial}^i.$$
\end{Theorem}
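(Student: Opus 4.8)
The plan is to compute $X_H$ directly from its defining relation $\iota_{X_H}\omega=-dH$, exploiting the splitting of the symplectic form $\omega$ recorded in~(\ref{c s s}) together with the fact that the three building blocks $H_1$, $H_2$ and $f$ depend on mutually disjoint sets of coordinates.

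First I would write the unknown vector field in the natural basis as
$$X_H=A^i\frac{\partial}{\partial x^i}+B^{\alpha}\frac{\partial}{\partial z^{\alpha}}+C_i\dot{\partial}^i+D_{\alpha}\dot{\partial}^{\alpha},$$
and contract it with $\omega=dp_i\wedge dx^i+dq_{\alpha}\wedge dz^{\alpha}$. Using $\iota_X(\eta\wedge\zeta)=(\iota_X\eta)\zeta-\eta(\iota_X\zeta)$ one finds $\iota_{X_H}\omega=C_i\,dx^i-A^i\,dp_i+D_{\alpha}\,dz^{\alpha}-B^{\alpha}\,dq_{\alpha}$. Matching this coefficientwise against $-dH=-\tfrac{\partial H}{\partial x^i}dx^i-\tfrac{\partial H}{\partial z^{\alpha}}dz^{\alpha}-\tfrac{\partial H}{\partial p_i}dp_i-\tfrac{\partial H}{\partial q_{\alpha}}dq_{\alpha}$ yields the standard coordinate expression
$$X_H=\frac{\partial H}{\partial p_i}\frac{\partial}{\partial x^i}+\frac{\partial H}{\partial q_{\alpha}}\frac{\partial}{\partial z^{\alpha}}-\frac{\partial H}{\partial x^i}\dot{\partial}^i-\frac{\partial H}{\partial z^{\alpha}}\dot{\partial}^{\alpha},$$
and the same formula specialized to each factor gives the analogous expressions for $X_{H_1}$ and $X_{H_2}$ on $T^*M_1$ and $T^*M_2$.

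The second step is to insert $H=H_1+fH_2$ and use the coordinate dependence fixed in Section~2: $H_1=H_1(x,p)$, $H_2=H_2(z,q)$, and $f=f(x)$. Consequently $\partial H/\partial p_i=\partial H_1/\partial p_i$, $\partial H/\partial q_{\alpha}=f\,\partial H_2/\partial q_{\alpha}$, $\partial H/\partial z^{\alpha}=f\,\partial H_2/\partial z^{\alpha}$, while the only mixing term appears in $\partial H/\partial x^i=\partial H_1/\partial x^i+H_2\,\partial f/\partial x^i$. Substituting these and regrouping, the terms built from $H_1$ assemble into $X_{H_1}$, the terms carrying the factor $f$ assemble into $fX_{H_2}$, and the leftover contribution from differentiating the warping function is exactly $-H_2\,(\partial f/\partial x^i)\dot{\partial}^i$, which is the asserted formula.

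Since the computation is entirely algebraic, there is no genuine obstacle; the only place demanding care is the sign bookkeeping in the interior product and making sure the single cross term $H_2\,\partial f/\partial x^i$ — the one responsible for the warping correction — lands in the $\dot{\partial}^i$ slot rather than being absorbed into $X_{H_1}$. I would finally double-check the result by verifying that $\iota_{X_H}\omega=-dH$ indeed holds for the expression obtained, or equivalently by confirming that $X_H$ reduces to $X_{H_1}+X_{H_2}$ when $f$ is the constant function $1$.
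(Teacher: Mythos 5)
Your proposal is correct and follows exactly the route the paper takes: the paper's own proof simply invokes the defining relation $\iota_{X_H}\omega=-dH$ and calls the rest a straightforward calculation, which is precisely the computation you carry out in full (contracting with $\omega=dp_i\wedge dx^i+dq_{\alpha}\wedge dz^{\alpha}$, matching coefficients, and substituting $H=H_1+fH_2$ with the coordinate dependences $H_1(x,p)$, $H_2(z,q)$, $f(x)$). Your worked-out version, including the isolation of the warping correction $-H_2\,(\partial f/\partial x^i)\dot{\partial}^i$, is a valid and complete filling-in of the details the paper omits.
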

\begin{proof}
By definition of Hamilton vector fields i.e. $\iota_{X_{H}}\omega=-dH$. It is a straightforward calculation to complete the prove.
\end{proof}

%%%%%%%%%%%%%%%%%%%%%%%%%%%%%%%%%%%%%%%%%%%%%%%%%%%%%%%%%%%%%%%%%%%%%%%%%%%%%%%%%%%%%%%%%%%%%%%%%%%%%%%%%%
\section{Nonlinear Connection on Warped Product Hamiltonian Space}
For Hamiltonian spaces $\mathbb{H}_1^n$ and $\mathbb{H}_2^m$, the equations
\begin{equation}~\label{metric1}
\left\{
\begin{array}{l}
g^{ij}=\frac{1}{2}\dot{\partial}^i\dot{\partial}^jH_1, \cr g^{\alpha\beta}=\frac{1}{2}\dot{\partial}^{\alpha}\dot{\partial}^{\beta}H_2.
\end{array}
\right.
\end{equation}
define fundamental tensors of spaces $\mathbb{H}_1^n$ and $\mathbb{H}_2^m$, respectively. The fundamental tensor of warped product Hamiltonian space $(M,H)$ is given by:
\begin{equation}~\label{metric2}
(g^{ab})=\left(\frac{1}{2}\bar{\partial}^a\bar{\partial}^bH\right)=\left(%
\begin{array}{cc}
  g^{ij} & 0 \\
  0 & fg^{\alpha\beta}
  \end{array}%
\right).
\end{equation}
Now, it is easy to check that $(M,H)$ is a Hamilton space as well. By the definition of the canonical nonlinear connections of a Hamiltonian space presented in~\cite{miron}, the canonical nonlinear connections of $\mathbb{H}_1^n$, $\mathbb{H}_2^m$ and $\mathbb{H}$, respectively, are obtained as follows
\begin{equation}~\label{nonlinear}
\left\{
\begin{array}{l}
N_{ij}=\frac{1}{4}\{g_{ij},H_1\}-\frac{1}{4}\left(g_{ik}\frac{\partial^2H_1}{\partial p_k\partial x^j}+g_{jk}\frac{\partial^2H_1}{\partial p_k\partial x^i}\right),\cr
N_{\alpha\beta}=\frac{1}{4}\{g_{\alpha\beta},H_2\}-\frac{1}{4}\left(g_{\alpha\gamma}\frac{\partial^2H_2}{\partial q_{\gamma}\partial z^{\beta}}+g_{\beta\gamma}\frac{\partial^2H_2}{\partial q_{\gamma}\partial z^{\alpha}}\right),\cr
\bar{N}_{ab}=\frac{1}{4}\{g_{ab},H\}-\frac{1}{4}\left(g_{ac}\frac{\partial^2H}
{\partial\mathbf{p}_c\partial\mathbf{x}^b}+g_{bc}\frac{\partial^2H}{\partial\mathbf{p}_c\partial\mathbf{x}^a}\right),
\end{array}
\right.
\end{equation}
where $(g_{ij})$, $(g_{\alpha\beta})$ and $(g_{ab})$ are the inverse matrices of $(g^{ij})$, $(g^{\alpha\beta})$ and $(g^{ab})$, respectively. The relation of the nonlinear connections $\bar{N}_{ab}$ of the Hamiltonian space $\mathbb{H}$ and those of $\mathbb{H}_1^n$ and $\mathbb{H}_2^m$ are given by
\begin{equation}~\label{nonlinear1}
\left\{
\begin{array}{l}
\bar{N}_{ij}=N_{ij}+\frac{1}{4}\dot{\partial}^kg_{ij}\frac{\partial f}{\partial x^k}H_2,\cr
\bar{N}_{\alpha\beta}:=\bar{N}_{(\alpha+n)(\beta+n)}=N_{\alpha\beta}-\frac{1}{4f^2}g_{\alpha\beta}\dot{\partial}^kH_1\frac{\partial f}{\partial x^k},\cr
\bar{N}_{i\alpha}:=\bar{N}_{i(\alpha+n)}=-\frac{1}{4f}g_{\alpha\beta}\dot{\partial}^{\beta}H_2\frac{\partial f}{\partial x^i}.
\end{array}
\right.
\end{equation}
Let $\pi$ be the projection map
$$\pi:=(\pi_1,\pi_2):T^*M_1\oplus T^*M_2\longrightarrow M_1\times M_2.$$
Then, the kernel of $\pi_*$ is known as vertical bundle on $T^*M$ and denoted by $VT^*M$. The local sections of $VT^*M$ are given by
$$\{\frac{\partial}{\partial p_1},...,\frac{\partial}{\partial p_n},\frac{\partial}{\partial q_1},...,\frac{\partial}{\partial q_m}\}.$$
Using the nonlinear connections $\bar{N}_{ij}$, $\bar{N}_{i\alpha}$ and $\bar{N}_{\alpha\beta}$ to define the nonholomorphic vector fields as follows
\begin{equation}~\label{basis}
\left\{
\begin{array}{l}
\frac{\delta^*}{\delta^*x^i}:=\frac{\delta^*}{\delta^*\mathbf{x}^i}=\frac{\partial}{\partial x^i}+\bar{N}_{ij}\dot{\partial}^j+\bar{N}_{i\alpha}\dot{\partial}^{\alpha},\cr
\frac{\delta^*}{\delta^*z^{\alpha}}:=\frac{\delta^*}{\delta^*\mathbf{x}^{\alpha+n}}=\frac{\partial}{\partial z^{\alpha}}+\bar{N}_{\alpha i}\dot{\partial}^i+\bar{N}_{\alpha\beta}\dot{\partial}^{\beta},
\end{array}
\right.
\end{equation}
which make the warped horizontal distribution on $T^*M$ denoted by $HT^*M$. The dual 1-forms of these local vector fields are given by
\begin{equation}~\label{basis1}
\left\{
\begin{array}{l}
d\mathbf{x}^a=dx^i\delta_i^a+dz^{\alpha}\delta_{\alpha+n}^a,\cr
\delta^*p_i:=\delta\mathbf{p}_i=dp_i-\bar{N}_{ij}dx^j-\bar{N}_{i\alpha}dz^{\alpha},\cr
\delta^*q_{\alpha}:=\delta\mathbf{p}_{\alpha+n}=dq_{\alpha}-\bar{N}_{\alpha i}dx^i-\bar{N}_{\alpha\beta}dz^{\beta}.
\end{array}
\right.
\end{equation}
Moreover, the Sasakian metric $G$ on $T^*M$ of the Hamiltonian structure $H$ is defined by
\begin{equation}~\label{metric3}
G=g_{ij}dx^i\otimes dx^j+\frac{g_{\alpha\beta}}{f}dz^{\alpha}\otimes dz^{\beta}+g^{ij}\delta^*p_i\otimes\delta^*p_j+fg^{\alpha\beta}\delta^*q_{\alpha}\otimes\delta^*q_{\beta}.
\end{equation}
%The almost complex structure $J$, compatible with metric $G$ is given by:
%\begin{equation}~\label{a c s}
%J:=g^{ab}\frac{\delta}{\delta\mathbf{x}^a}\otimes\delta\mathbf{p}_b-g_{ab}\bar{\partial}^a\otimes d\mathbf{x}^b
%\end{equation}

%%%%%%%%%%%%%%%%%%%%%%%%%%%%%%%%%%%%%%%%%%%%%%%%%%%%%%%%%%%%%%%%%%%%%%%%%%%%%%%%%%%%%%%%%%%%%%%%%%%%%%%%%%
\section{The Levi-Civita Connection of Metric $G$}
The Lie brackets of the local vector fields given in previous section are presented as follows
\begin{equation}~\label{lie brac1}
\left\{
\begin{array}{l}
[\frac{\delta^*}{\delta^*x^i},\frac{\delta^*}{\delta^*x^j}]=\mathbf{R}_{ijk}\dot{\partial}^k
+\mathbf{R}_{ij\alpha}\dot{\partial}^{\alpha},\cr
[\frac{\delta^*}{\delta^*x^i},\frac{\delta^*}{\delta^*z^{\alpha}}]=\mathbf{R}_{i\alpha j}\dot{\partial}^j+\mathbf{R}_{i\alpha\beta}\dot{\partial}^{\beta},\cr
[\frac{\delta^*}{\delta^*z^{\alpha}},\frac{\delta^*}{\delta^*z^{\beta}}]=\mathbf{R}_{\alpha\beta i}\dot{\partial}^i+\mathbf{R}_{\alpha\beta\gamma}\dot{\partial}^{\gamma},
\end{array}
\right.
\end{equation}
where
\begin{equation}~\label{R}
\left\{
\begin{array}{l}
\mathbf{R}_{ijk}=\frac{\delta^*\bar{N}_{jk}}{\delta^*x^i}-\frac{\delta^*\bar{N}_{ik}}{\delta^*x^j}\ ,\ \ \ \mathbf{R}_{ij\alpha}=\frac{\delta^*\bar{N}_{j\alpha}}{\delta^*x^i}-\frac{\delta^*\bar{N}_{i\alpha}}{\delta^*x^j},\cr
\mathbf{R}_{i\alpha k}=\frac{\delta^*\bar{N}_{\alpha k}}{\delta^*x^i}-\frac{\delta^*\bar{N}_{ik}}{\delta^*z^{\alpha}}\ ,\ \ \ \mathbf{R}_{i\alpha\beta}=\frac{\delta^*\bar{N}_{\alpha\beta}}{\delta^*x^i}-\frac{\delta^*\bar{N}_{i\beta}}{\delta^*z^{\alpha}},\cr
\mathbf{R}_{\alpha\beta k}=\frac{\delta^*\bar{N}_{\beta k}}{\delta^*z^{\alpha}}-\frac{\delta^*\bar{N}_{\alpha k}}{\delta^*z^{\beta}}\ ,\ \ \ \mathbf{R}_{\alpha\beta\gamma}=\frac{\delta^*\bar{N}_{\beta\gamma}}{\delta^*z^{\alpha}}
-\frac{\delta^*\bar{N}_{\alpha\gamma}}{\delta^*z^{\beta}}.
\end{array}
\right.
\end{equation}
The components $\mathbf{R}_{abc}$ are called \emph{curvature tensors} of nonlinear connection $\bar{N}_{ab}$ and they are skew-symmetric with respect to the indices $a$ and $b$. Moreover
\begin{equation}~\label{lie brac2}
\left\{
\begin{array}{l}
[\dot{\partial}^i,\frac{\delta^*}{\delta^*x^j}]=\dot{\partial}^i(\bar{N}_{jk})\dot{\partial}^k,\cr
[\dot{\partial}^{\alpha},\frac{\delta^*}{\delta^*x^i}]=\dot{\partial}^{\alpha}(\bar{N}_{ik})\dot{\partial}^k
+\dot{\partial}^{\alpha}(\bar{N}_{i\beta})\dot{\partial}^{\beta},\cr
[\dot{\partial}^i,\frac{\delta^*}{\delta^*z^{\alpha}}]=\dot{\partial}^i(\bar{N}_{\alpha\beta})\dot{\partial}^{\beta},\cr
[\dot{\partial}^{\alpha},\frac{\delta^*}{\delta^*z^{\beta}}]=\dot{\partial}^{\alpha}(\bar{N}_{\beta k})\dot{\partial}^k+\dot{\partial}^{\alpha}(\bar{N}_{\beta\gamma})\dot{\partial}^{\gamma}.
\end{array}
\right.
\end{equation}
Let $\nabla$ be the Levi-Civita connection on $(T^*M,G)$ which is given by:
\begin{equation}~\label{levi-civita}
\begin{array}{l}
2G(\nabla_XY,Z)=XG(Y,Z)+YG(X,Z)-ZG(X,Y)\cr
\hspace{2.7cm}-G([X,Z],Y)-G([Y,Z],X)+G([X,Y],Z),
\end{array}
\end{equation}
for any $X,Y,Z\in\Gamma(TT^*M)$. Then, the components of $\nabla$ are given by:

\begin{equation}~\label{levi-civita1}
\left\{
\begin{array}{l}
\nabla_{\frac{\delta^*}{\delta^*x^i}}\frac{\delta^*}{\delta^*x^j}=\Gamma_{ij}^k\frac{\delta^*}{\delta^*x^k}-
\frac{f}{2}\bar{N}_{\alpha k}g_{ij}^kg^{\alpha\beta}\frac{\delta^*}{\delta^*z^{\beta}}+
\frac{1}{2}g_{ijk}\dot{\partial}^k+\frac{1}{2}\mathbf{R}_{ija}\bar{\partial}^a\cr

\nabla_{\frac{\delta^*}{\delta^*x^i}}\frac{\delta^*}{\delta^*z^{\alpha}}=\nabla_{\frac{\delta^*}{\delta^*z^{\alpha}}}
\frac{\delta^*}{\delta^*x^i}+\mathbf{R}_{i\alpha a}\bar{\partial}^a=-\frac{1}{2}\bar{N}_{\alpha j}g_i^{jk}\frac{\delta^*}{\delta^*x^k}\cr
\hspace{2cm}+\frac{1}{2}(\frac{\partial\ln f}{\partial x^i}\delta_{\alpha}^{\gamma}-\bar{N}_{i\beta}g_{\alpha}^{\beta\gamma})\frac{\delta^*}{\delta^*z^{\gamma}}+\frac{1}{2}\mathbf{R}_{i\alpha a}\bar{\partial}^a\cr

\nabla_{\frac{\delta^*}{\delta^*z^{\alpha}}}\frac{\delta^*}{\delta^*z^{\beta}}=-\frac{1}{2}
\frac{\delta^*fg_{\alpha\beta}}{\delta^*x^i}g^{ij}\frac{\delta^*}{\delta^*x^j}+\Gamma_{\alpha\beta}^{\gamma}
\frac{\delta^*}{\delta^*z^{\gamma}}+\frac{1}{2f^2}g_{\alpha\beta\lambda}
\dot{\partial}^{\lambda}\cr
\hspace{2cm}+\frac{1}{2}\mathbf{R}_{\alpha\beta a}\bar{\partial}^a
\end{array}
\right.
\end{equation}
\begin{equation}~\label{levi-civita2}
\left\{
\begin{array}{l}
\nabla_{\dot{\partial}^i}\dot{\partial}^{\alpha}=\nabla_{\dot{\partial}^{\alpha}}\dot{\partial}^i=\frac{1}{8}
\dot{\partial}^{\alpha}H_2g^{ikh}\frac{\partial f}{\partial x^k}\frac{\delta^*}{\delta^*x^h}\cr
\hspace{1.5cm}-\frac{1}{2}(f^2\dot{\partial}^i(\bar{N}_{\beta\gamma})g^{\gamma\alpha}g^{\beta\lambda}+
f\dot{\partial}^{\alpha}(\bar{N}_{\beta k})g^{ki}g^{\beta\lambda})\frac{\delta^*}{\delta^*z^{\lambda}}\cr

\nabla_{\dot{\partial}^i}\dot{\partial}^j=-\frac{1}{2}(\frac{\delta^*g^{ij}}{\delta^*x^k}+
\dot{\partial}^i(\bar{N}_{kt})g^{tj}+\dot{\partial}^j(\bar{N}_{kt})g^{ti})g^{kh}\frac{\delta^*}{\delta^*x^h}\cr
\hspace{1.5cm}+\frac{1}{8}\dot{\partial}^{\beta}H_2g^{ijk}\frac{\partial f}{\partial x^k}\frac{\delta^*}{\delta^*z^{\beta}}+\frac{1}{2}g_k^{ij}\dot{\partial}^k\cr

\nabla_{\dot{\partial}^{\alpha}}\dot{\partial}^{\beta}=-\frac{1}{2}(\frac{\delta^*fg^{\alpha\beta}}{\delta^*x^k}+
f\dot{\partial}^{\alpha}(\bar{N}_{k\gamma})g^{\gamma\beta}+f\dot{\partial}^{\beta}(\bar{N}_{k\gamma})g^{\gamma\alpha}
)g^{kh}\frac{\delta^*}{\delta^*x^h}\cr
\hspace{1.5cm}-\frac{f^2}{2}(\frac{\delta^*g^{\alpha\beta}}{\delta^*z^{\gamma}}+\dot{\partial}^{\alpha}(\bar{N}_{\gamma\theta})
g^{\theta\beta}+\dot{\partial}^{\beta}(\bar{N}_{\gamma\theta})g^{\theta\alpha})g^{\gamma\lambda}
\frac{\delta^*}{\delta^*z^{\lambda}}+\frac{1}{2}g_{\gamma}^{\alpha\beta}\dot{\partial}^{\gamma}
\end{array}
\right.
\end{equation}
\begin{equation}~\label{levi-civita3}
\left\{
\begin{array}{l}
\nabla_{\frac{\delta^*}{\delta^*x^i}}\dot{\partial}^j=\nabla_{\dot{\partial}^j}\frac{\delta^*}{\delta^*x^i}-
\dot{\partial}^j(\bar{N}_{ik})\dot{\partial}^k=-\frac{1}{2}\dot{\partial}^j(\bar{N}_{ik})\dot{\partial}^k\cr
\hspace{1.5cm}-\frac{1}{2}(g_i^{jh}+\mathbf{R}_{iks}g^{sj}g^{kh})
\frac{\delta^*}{\delta^*x^h}-\frac{f}{2}\mathbf{R}_{i\alpha k}g^{kj}g^{\alpha\beta}\frac{\delta^*}{\delta^*z^{\beta}}\cr
\hspace{1.5cm}+\frac{1}{2}(\frac{\delta^*g^{jk}}{\delta^*x^i}+\dot{\partial}^k(\bar{N}_{is})g^{sj})g_{kh}\dot{\partial}^h+
\frac{1}{2f}\dot{\partial}^{\alpha}(\bar{N}_{ik})g^{kj}g_{\alpha\beta}\dot{\partial}^{\beta}\cr

\nabla_{\frac{\delta^*}{\delta^*x^i}}\dot{\partial}^{\alpha}=\nabla_{\dot{\partial}^{\alpha}}
\frac{\delta^*}{\delta^*x^i}-\dot{\partial}^{\alpha}(\bar{N}_{ia})\bar{\partial}^a=-\frac{1}{2}\dot{\partial}^{\alpha}(\bar{N}_{ia})\bar{\partial}^a\cr
\hspace{1.5cm}\frac{f}{2}\mathbf{R}_{ki\beta}g^{\beta\alpha}g^{kh}\frac{\delta^*}{\delta^*x^h}+\frac{f^2}{2}\mathbf{R}_{\beta i\gamma}g^{\gamma\alpha}g^{\beta\lambda}\frac{\delta^*}{\delta^*z^{\lambda}}\cr
\hspace{1.5cm}+\frac{1}{2}(\frac{1}{f}\frac{\delta^*fg^{\alpha\beta}}{\delta^*x^i}+\dot{\partial}^{\beta}(\bar{N}_{i\gamma})
g^{\gamma\alpha})g_{\beta\lambda}\dot{\partial}^{\lambda}\cr

\nabla_{\frac{\delta^*}{\delta^*z^{\alpha}}}\dot{\partial}^i=\nabla_{\dot{\partial}^i}
\frac{\delta^*}{\delta^*z^{\alpha}}-\dot{\partial}^i(\bar{N}_{\alpha \beta})\dot{\partial}^{\beta}=-\frac{1}{2}\dot{\partial}^i(\bar{N}_{\alpha\beta})\bar{\partial}^{\beta}\cr
\hspace{1.5cm}\frac{1}{2}\mathbf{R}_{k\alpha s}g^{si}g^{kh}\frac{\delta^*}{\delta^*x^h}+\frac{f}{2}\mathbf{R}_{\beta\alpha k}g^{ki}g^{\beta\gamma}\frac{\delta^*}{\delta^*z^{\gamma}}+\frac{1}{2}\frac{\delta^*g^{ik}}{\delta^*z^{\alpha}}g_{kh}\dot{\partial}^h\cr
\hspace{1.5cm}+\frac{1}{2f}\dot{\partial}^{\beta}(\bar{N}_{\alpha k})g^{ki}g_{\beta\gamma}\dot{\partial}^{\gamma}\cr

\nabla_{\frac{\delta^*}{\delta^*z^{\alpha}}}\dot{\partial}^{\beta}=\nabla_{\dot{\partial}^{\beta}}
\frac{\delta^*}{\delta^*z^{\alpha}}-\dot{\partial}^{\beta}(\bar{N}_{\alpha a})\bar{\partial}^a=-\frac{1}{2}\dot{\partial}^{\beta}(\bar{N}_{\alpha a})\bar{\partial}^a\cr
\hspace{1.5cm}\frac{f}{2}\mathbf{R}_{k\alpha\gamma}g^{\gamma\beta}g^{kh}\frac{\delta^*}{\delta^*x^h}-\frac{1}{2}(
g_{\alpha}^{\beta\lambda}+f^2\mathbf{R}_{\alpha\gamma\theta}g^{\theta\beta}g^{\gamma\lambda})\frac{\delta^*}{\delta^*z^{\lambda}}\cr
\hspace{1.5cm}-\frac{1}{4f}\delta_{\alpha}^{\beta}\frac{\partial f}{\partial x^j}\dot{\partial}^j+\frac{1}{2}
(\frac{\delta^*g^{\beta\gamma}}{\delta^*z^{\alpha}}g_{\gamma\lambda}+\dot{\partial}^{\gamma}(\bar{N}_{\alpha\theta})
g^{\theta\beta}g_{\gamma\lambda})\dot{\partial}^{\lambda}
\end{array}
\right.
\end{equation}
where,
$$g^{abc}=\bar{\partial}^ag^{bc},\ \ g_{abc}=g_{cf}g_{ab}^f=g_{cf}g_{be}g_a^{ef}=g_{cf}g_{be}g_{ad}g^{def}$$
and
$$\Gamma_{ij}^k=\frac{g^{kh}}{2}\left(\frac{\delta^*g_{jh}}{\delta^*x^i}+\frac{\delta^*g_{ih}}{\delta^*x^j}
-\frac{\delta^*g_{ij}}{\delta^*x^h}\right),$$
$$\Gamma_{\alpha\beta}^{\gamma}=\frac{g^{\gamma\lambda}}{2}\left(\frac{\delta^*g_{\beta\lambda}}{\delta^*z^{\alpha}}
+\frac{\delta^*g_{\alpha\lambda}}{\delta^*z^{\beta}}
-\frac{\delta^*g_{\alpha\beta}}{\delta^*z^{\lambda}}\right).$$
%%%%%%%%%%%%%%%%%%%%%%%%%%%%%%%%%%%%%%%%%%%%%%%%%%%%%%%%%%%%%%%%%%%%%%%%%%%%%%%%%%%%%%%%%%%%%%%%%%%%%%%%%%
\section{Foliations on Warped Product Hamiltonian Spaces}
In this section, we study geometric properties of vertical distribution $VT^*M$ such as being bundle-like with respect to the metric $G$ and being totally geodesic. The conditions which are equivalent to these properties show a close relation between the geometry of the warped Hamiltonian manifold and its base Hamiltonian spaces.

\begin{Theorem}~\label{bundle like}
Let $\mathbb{H}=(M,H)$ be a warped product Hamiltonian space with nonconstant warped function $f$. Then, the warped Sasaki metric $G$ is bundlelike for vertical foliation $VT^*M$ if and only if $(M_1,(g_{ij}))$ and $(M_2,(g_{\alpha\beta}))$ are two pseudo-Riemannian manifolds.
\end{Theorem}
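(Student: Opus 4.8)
The plan is to use the standard (Reinhart) criterion for a bundle-like metric: $G$ is bundle-like for the foliation tangent to $VT^*M$ if and only if, for every pair of foliate (leaf-projectable) horizontal vector fields $X,Y$ and every vertical field $U$, the function $G(X,Y)$ is constant along the leaves, that is $U\,G(X,Y)=0$; equivalently, rewritten through the Levi-Civita connection of Section~4, the condition reads $G(\nabla_X U,Y)+G(\nabla_Y U,X)=0$ for all horizontal $X,Y$ and vertical $U$. First I would observe, from the brackets (\ref{lie brac2}), that each adapted horizontal field $\frac{\delta^*}{\delta^*x^i}$ and $\frac{\delta^*}{\delta^*z^{\alpha}}$ is foliate, since its bracket with any $\dot{\partial}^i$ or $\dot{\partial}^{\alpha}$ is again vertical; hence it suffices to test the criterion on this frame.

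Next I would evaluate the criterion on the frame using the Sasaki metric (\ref{metric3}). The only nonzero horizontal inner products are $G(\frac{\delta^*}{\delta^*x^i},\frac{\delta^*}{\delta^*x^j})=g_{ij}$ and $G(\frac{\delta^*}{\delta^*z^{\alpha}},\frac{\delta^*}{\delta^*z^{\beta}})=g_{\alpha\beta}/f$, the mixed $x$--$z$ block being zero. Since $f=f(x)$ is constant along the fibres and $g_{ij}$ (resp.\ $g_{\alpha\beta}$) depends only on the variables of $M_1$ (resp.\ $M_2$), applying $\dot{\partial}^k$ and $\dot{\partial}^{\gamma}$ collapses the whole system to the two conditions $\dot{\partial}^k g_{ij}=0$ and $\dot{\partial}^{\gamma}g_{\alpha\beta}=0$, the remaining mixed derivatives vanishing identically. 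Carrying this out instead through the connection components of Section~4, I expect every curvature term $\mathbf{R}_{abc}$ to drop out of the symmetrization $G(\nabla_X U,Y)+G(\nabla_Y U,X)$ because $\mathbf{R}_{abc}$ is skew in its first two indices, leaving exactly the Cartan-type terms $g_i^{\,jk}=g_{is}\dot{\partial}^s g^{jk}$ and their $M_2$ analogue.

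I would then translate the two surviving conditions. From (\ref{metric1}), $g^{ij}=\tfrac12\dot{\partial}^i\dot{\partial}^j H_1$ is independent of the momenta $p$ if and only if $H_1$ is quadratic in $p$ with position-dependent coefficients, which is precisely the statement that $g_{ij}=g_{ij}(x)$ defines a (pseudo-)Riemannian metric on $M_1$; the same argument applied to $H_2$ and $q$ gives the Riemannian condition on $M_2$. Combining the two implications yields the asserted equivalence.

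The main obstacle I anticipate is the bookkeeping that isolates these two conditions when one argues through the explicit Levi-Civita connection rather than through the Reinhart criterion directly: one must verify that all curvature contributions cancel by skew-symmetry and, crucially, that the warping contributions---those carrying the factor $\frac{\partial f}{\partial x^i}$, which enter through the cross coefficients $\bar{N}_{i\alpha}$ of (\ref{nonlinear1}) and are genuinely present exactly because $f$ is nonconstant---also cancel in the symmetrized expression, so that no spurious condition survives and the criterion reduces cleanly to the vanishing of the two Cartan tensors.
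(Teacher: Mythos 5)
Your proposal is correct and follows essentially the same route as the paper: both test the bundle-like criterion $G(\nabla_XY+\nabla_YX,U)=0$ (equivalently, $U\,G(X,Y)=0$ for foliate horizontal $X,Y$ and vertical $U$) on the adapted frame $\{\frac{\delta^*}{\delta^*x^i},\frac{\delta^*}{\delta^*z^{\alpha}}\}$, and both reduce it to the vanishing of the vertical derivatives $\dot{\partial}^k g_{ij}$ and $\dot{\partial}^{\gamma}g_{\alpha\beta}$, i.e.\ $g_{ijk}=g_{\alpha\beta\gamma}=0$, which is exactly the condition that $(M_1,(g_{ij}))$ and $(M_2,(g_{\alpha\beta}))$ be Riemannian. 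The only difference is executional: the paper substitutes the explicit Levi-Civita components~(\ref{levi-civita1}) of Section 4 (where, as you anticipated, the curvature terms $\mathbf{R}_{abc}$ cancel by skew-symmetry and the mixed $x$--$z$ equations become vacuous), whereas your primary computation differentiates the frame inner products $g_{ij}$ and $g_{\alpha\beta}/f$ directly along the fibres and therefore never needs those formulas.
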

\begin{proof}
With respect to bundle-like condition (see~\cite{beja,moli}), $G$ is bundle-like for $VT^*M$ if and only if:
$$G(\nabla_XY+\nabla_YX,Z)=0 \ \ \ \ \forall X,Y\in\Gamma(HT^*M), Z\in\Gamma(VT^*M),$$
It is equivalent to following equations:
$$G(\nabla_{\frac{\delta^*}{\delta^*x^i}}\frac{\delta^*}{\delta^*x^j}+\nabla_{\frac{\delta^*}{\delta^*x^j}}
\frac{\delta^*}{\delta^*x^i},\dot{\partial}^k)=G(\nabla_{\frac{\delta^*}{\delta^*x^i}}\frac{\delta^*}{\delta^*x^j}
+\nabla_{\frac{\delta^*}{\delta^*x^j}}\frac{\delta^*}{\delta^*x^i},\dot{\partial}^{\alpha})=0,$$
$$G(\nabla_{\frac{\delta^*}{\delta^*u^{\alpha}}}\frac{\delta^*}{\delta^*u^{\beta}}+\nabla_{
\frac{\delta^*}{\delta^*u^{\beta}}}\frac{\delta^*}{\delta^*u^{\alpha}},\dot{\partial}^i)=G(\nabla_{
\frac{\delta^*}{\delta^*u^{\alpha}}}\frac{\delta^*}{\delta^*u^{\beta}}+\nabla_{
\frac{\delta^*}{\delta^*u^{\beta}}}\frac{\delta^*}{\delta^*u^{\alpha}},\dot{\partial}^{\gamma})=0,$$
$$G(\nabla_{\frac{\delta^*}{\delta^*x^i}}\frac{\delta^*}{\delta^*u^{\alpha}}+\nabla_{
\frac{\delta^*}{\delta^*u^{\alpha}}}\frac{\delta^*}{\delta^*x^i},\dot{\partial}^j)=G(\nabla_{\frac{\delta^*}{\delta^*x^i}}\frac{\delta^*}{\delta^*u^{\alpha}}+\nabla_{
\frac{\delta^*}{\delta^*u^{\alpha}}}\frac{\delta^*}{\delta^*x^i},\dot{\partial}^{\beta})=0.$$
By using~(\ref{levi-civita})-(\ref{levi-civita3}), one can obtain that above equations are satisfied if and only if $g_{ijk}=g_{\alpha\beta\gamma}=0$, and this completes the proof.
\end{proof}

\begin{Theorem}~\label{tot. geo.}
Let $\mathbb{H}=(M,H)$ be a warped product Hamiltonian space with nonconstant warped function $f$. Then, $\mathbb{H}=(M,H)$ is a Landsberg-Hamilton space if and only if the vertical foliation $VT^*M$ is totally geodesic.
\end{Theorem}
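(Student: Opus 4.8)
The plan is to read the totally--geodesic condition directly off the Levi--Civita formulas~(\ref{levi-civita1}) and to recognise the resulting expressions as the Landsberg tensor of $\mathbb{H}$.

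First I recall that the vertical foliation $VT^*M$ is totally geodesic exactly when its second fundamental form vanishes, i.e. when $\mathrm{hor}(\nabla_XY)=0$ for all $X,Y\in\Gamma(VT^*M)$, $\mathrm{hor}$ being the projection onto $HT^*M$. As $\nabla$ is torsion free and $VT^*M$ is integrable, $\nabla_XY-\nabla_YX=[X,Y]\in\Gamma(VT^*M)$, so the horizontal part is symmetric and it suffices to test it on the vertical frame $\{\dot\partial^i,\dot\partial^{\alpha}\}$. Thus $VT^*M$ is totally geodesic if and only if the $\frac{\delta^*}{\delta^*x^h}$-- and $\frac{\delta^*}{\delta^*z^{\lambda}}$--components of $\nabla_{\dot\partial^i}\dot\partial^j$, $\nabla_{\dot\partial^i}\dot\partial^{\alpha}$ and $\nabla_{\dot\partial^{\alpha}}\dot\partial^{\beta}$ in~(\ref{levi-civita1}) all vanish identically.

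Next I identify these horizontal coefficients with one intrinsic object. Recall that $\mathbb{H}$ is a Landsberg--Hamilton space precisely when the $h$--covariant derivative of its fundamental tensor~(\ref{metric2}) with respect to the Berwald--type connection $B^a_{bc}=\dot\partial^a\bar N_{bc}$ vanishes, i.e. when the Landsberg tensor
$$\mathbf{P}^{ab}_{\ \ c}:=\frac{\delta^*g^{ab}}{\delta^*\mathbf{x}^c}+\dot\partial^a(\bar N_{cd})g^{db}+\dot\partial^b(\bar N_{cd})g^{da}$$
is identically zero. To connect this with the previous step I would apply the Koszul formula~(\ref{levi-civita}): for horizontal $Z$, using $G(VT^*M,HT^*M)=0$, $[\dot\partial^a,\dot\partial^b]=0$ and the brackets~(\ref{lie brac2}),
$$2G(\nabla_{\dot\partial^a}\dot\partial^b,Z)=-Z\,G(\dot\partial^a,\dot\partial^b)-G([\dot\partial^a,Z],\dot\partial^b)-G([\dot\partial^b,Z],\dot\partial^a),$$
and the three terms on the right reassemble into $-\mathbf{P}^{ab}_{\ \ c}$ paired against $Z$. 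Hence $\mathrm{hor}(\nabla_{\dot\partial^a}\dot\partial^b)=-\tfrac12\mathbf{P}^{ab}_{\ \ c}\,\mathcal{G}^{cd}\frac{\delta^*}{\delta^*\mathbf{x}^d}$, where $\mathcal{G}^{cd}$ inverts the horizontal block $\mathrm{diag}(g_{ij},\frac{g_{\alpha\beta}}{f})$ of~(\ref{metric3}). Therefore the vanishing of all the horizontal components from the first step is equivalent to $\mathbf{P}^{ab}_{\ \ c}=0$, i.e. to $\mathbb{H}$ being Landsberg.

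The main obstacle, and essentially the only computation, is to verify block by block that the horizontal coefficients written out in~(\ref{levi-civita1}) really are the components of $\mathbf{P}^{ab}_{\ \ c}$ assembled from the explicit connection~(\ref{nonlinear1}). The delicate pieces are the mixed and warping terms: one must check, for example, that the coefficient $\tfrac18\dot\partial^{\beta}H_2\,g^{ijk}\frac{\partial f}{\partial x^k}$ multiplying $\frac{\delta^*}{\delta^*z^{\beta}}$ in $\nabla_{\dot\partial^i}\dot\partial^j$ is exactly the mixed component $\mathbf{P}^{ij}_{\ \ \gamma}$ after raising. Indeed, since $\bar N_{\gamma t}$ is independent of $p$, one computes $\mathbf{P}^{ij}_{\ \ \gamma}=\bar N_{\gamma t}\,g^{tij}=-\tfrac{1}{4f}g_{\gamma\beta}\dot\partial^{\beta}H_2\,g^{ijk}\frac{\partial f}{\partial x^k}$, and contracting the lower index with $fg^{\gamma\beta}$ reproduces the stated coefficient; the remaining cross terms arising from $\bar N_{i\alpha}$ are handled likewise. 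The nonconstancy of $f$ keeps these warping (and cross) terms genuinely present, paralleling the hypothesis of Theorem~\ref{bundle like}, while the identification above itself requires no assumption on $f$. Collecting the blocks then gives the asserted equivalence.
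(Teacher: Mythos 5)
Your proposal is correct and follows essentially the same route as the paper: the paper also identifies the Landsberg condition with the vanishing of the $h$-covariant derivative of the metric (its $g_{ab|_*c}$, which is your $\mathbf{P}^{ab}_{\ \ c}$ up to the paper's index typos) and then reads the equivalence with total geodesy of $VT^*M$ off the formulas~(\ref{levi-civita1}). The only difference is one of detail, not of method: you supply the Koszul-formula computation showing $G(\nabla_{\dot\partial^a}\dot\partial^b,\frac{\delta^*}{\delta^*\mathbf{x}^c})=-\tfrac12\mathbf{P}^{ab}_{\ \ c}$ and the block-by-block check of the warping terms, which the paper leaves implicit.
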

\begin{proof}
With respect to the definition of Landsberg-Hamilton space~\cite{miron}, $(M,H)$ is a Landsberg-Hamilton space if and only if 
$$g_{ab|_*c}=\frac{\delta^*g_{ab}}{\delta^*\mathbf{x}^c}+g^{bd}\dot{\partial}^a(\bar{N}_{dc})+g^{ad}\dot{\partial}^b(\bar{N}_{dc})=0,$$
By using~(\ref{levi-civita})-(\ref{levi-civita3}), one can check that $$g_{ab|_*c}=0,$$ is satisfied if and only if $VT^*M$ is totally geodesic, and this completes the proof.
\end{proof}

\begin{Theorem}~\label{tot geo}
Let $\mathbb{H}=(M,H)$ be a warped product Hamiltonian space with nonconstant warped function $f$. Then, the horizontal distribution $HT^*M$ is a totally geodesic one if and only if $(M,H)$ is an Euclidean space.
\end{Theorem}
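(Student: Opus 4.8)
The plan is to characterize total geodesy of the horizontal distribution through the vertical components of the Levi--Civita connection computed in~(\ref{levi-civita1}), and then to translate the resulting tensorial conditions into statements about the base Hamiltonians $H_1,H_2$. First I would record that, by the block form of $G$ in~(\ref{metric3}), the vertical bundle $VT^*M$ is exactly the $G$-orthogonal complement of $HT^*M$, spanned by $\{\dot\partial^i,\dot\partial^\alpha\}$. Hence $HT^*M$ is totally geodesic precisely when $\nabla_XY$ has vanishing vertical part for every pair $X,Y$ drawn from the horizontal frame $\{\frac{\delta^*}{\delta^*x^i},\frac{\delta^*}{\delta^*z^\alpha}\}$ of~(\ref{basis}). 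Reading the vertical ($\dot\partial$-)components off~(\ref{levi-civita1}), the three relevant expressions are $\tfrac12 g_{ijk}\dot\partial^k+\tfrac12\mathbf{R}_{ija}\bar\partial^a$, then $\tfrac1{2f^2}g_{\alpha\beta\lambda}\dot\partial^\lambda+\tfrac12\mathbf{R}_{\alpha\beta a}\bar\partial^a$, and finally $\tfrac12\mathbf{R}_{i\alpha a}\bar\partial^a$ for the mixed case. Since $\{\dot\partial^k,\dot\partial^\lambda\}$ are pointwise linearly independent, vanishing of all vertical parts is equivalent to the system $g_{ijk}=0$, $g_{\alpha\beta\lambda}=0$ together with $\mathbf{R}_{abc}=0$ for all $a,b,c$, i.e. the vanishing of the whole nonlinear-connection curvature~(\ref{R}). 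This is strictly stronger than the bundle-like condition of Theorem~\ref{bundle like}, which only saw the skew-symmetric $\mathbf{R}$-terms drop out under symmetrization.

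Next I would interpret these equations. The conditions $g_{ijk}=0$ and $g_{\alpha\beta\lambda}=0$ say that the Cartan (cubic) tensors of $\mathbb{H}_1^n$ and $\mathbb{H}_2^m$ vanish, that is, $g^{ij}$ and $g^{\alpha\beta}$ are independent of the momenta; this is exactly the assertion that $(M_1,(g_{ij}))$ and $(M_2,(g_{\alpha\beta}))$ are Riemannian manifolds with $H_1,H_2$ the associated quadratic Hamiltonians. Substituting this Riemannian reduction into~(\ref{nonlinear1}) simplifies the warped connection: $\dot\partial^kg_{ij}=0$ forces $\bar N_{ij}=N_{ij}$, while $\bar N_{i\alpha}$ and $\bar N_{\alpha\beta}$ retain explicit $\partial f/\partial x^k$ terms that are polynomial of degree one in the momenta. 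Feeding these into the curvature formulas~(\ref{R}) splits each $\mathbf{R}_{abc}$ into an intrinsic piece built from the base Levi--Civita connections (the Riemann curvatures of $(M_1,g_{ij})$ and $(M_2,g_{\alpha\beta})$) and a warping piece carrying the Hessian and gradient of $f$.

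The core of the argument, and the place where the hypothesis that $f$ is nonconstant enters, is to show that $\mathbf{R}_{abc}=0$ is equivalent to the flatness of both base metrics, since a flat Riemannian manifold is locally Euclidean. The idea is that the intrinsic curvature pieces and the $f$-gradient pieces depend on disjoint data — the base curvatures involve only $x,z$ and the momentum monomials $p_ip_j$ or $q_\alpha q_\beta$ of one factor, whereas the warping pieces couple $\nabla f$ to the momenta of the other factor — so that when $\nabla f\neq0$ the two groups cannot cancel and must vanish separately; the vanishing of the intrinsic parts is precisely flatness of $(M_1,g_{ij})$ and $(M_2,g_{\alpha\beta})$, giving the forward implication. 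For the converse I would substitute Euclidean data ($g_{ij},g_{\alpha\beta}$ constant, $N_{ij}=N_{\alpha\beta}=0$) directly into~(\ref{R}) and verify that the warping-generated components — including the mixed $\mathbf{R}_{i\alpha a}$ and $\mathbf{R}_{\alpha\beta a}$ — cancel, so that $\nabla_XY$ stays horizontal and $HT^*M$ is totally geodesic.

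The main obstacle is exactly this decoupling and cancellation step. The curvatures $\mathbf{R}_{i\alpha a}$ and $\mathbf{R}_{\alpha\beta a}$ mix the Riemann curvature of one base with the Hessian and gradient of $f$ and with the momenta of the other factor, so one must bookkeep the momentum degrees and the $x$- versus $z$-dependence very carefully to isolate the intrinsic flatness conditions from the warping identities, and — in the converse direction — to confirm that the $f$-dependent terms actually drop out rather than surviving as extra constraints on $f$. The nonconstant-$f$ hypothesis is what guarantees the warping terms are genuinely present so that they cannot conspire with nonzero base curvature; without it the statement degenerates to the Riemannian (bundle-like) situation of Theorem~\ref{bundle like}. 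I expect essentially all of the genuine work to lie in this curvature analysis, the reduction to $g_{ijk}=g_{\alpha\beta\lambda}=0$ and $\mathbf{R}_{abc}=0$ being an immediate read-off from~(\ref{levi-civita1}).
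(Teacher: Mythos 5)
Your first step---characterizing total geodesy of $HT^*M$ by the vanishing of the vertical parts of $\nabla_XY$ for $X,Y$ in the horizontal frame of~(\ref{basis}), and reading off from~(\ref{levi-civita1}) that this amounts to $g_{ijk}=g_{\alpha\beta\lambda}=0$ together with $\mathbf{R}_{abc}=0$---is exactly the paper's proof; the paper consists of precisely this read-off, followed by the bare assertions that these conditions show $(M,H)$ is Riemannian flat and that ``the inverse is true as well''. The genuine gap lies in the step you yourself flag as the main obstacle, and it cannot be closed in the way you propose: neither your decoupling claim (forward direction) nor your cancellation claim (converse) survives actual computation.

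Test the converse with Euclidean data: $g^{ij},g^{\alpha\beta}$ constant, $N_{ij}=N_{\alpha\beta}=0$, $f$ nonconstant, and write $\partial_if=\partial f/\partial x^i$. Then~(\ref{nonlinear1}) gives $\bar N_{ij}=0$, $\bar N_{i\alpha}=-\frac{1}{2f}q_\alpha\partial_if$, $\bar N_{\alpha\beta}=-\frac{1}{2f^2}g_{\alpha\beta}g^{kl}p_l\partial_kf$, and~(\ref{R}) yields (the $\bar N_{ij}$-term dropping out since $\bar N_{ij}=0$)
$$\mathbf{R}_{i\alpha j}=\frac{\delta^*\bar N_{\alpha j}}{\delta^*x^i}=-\frac{q_\alpha}{2f}\Bigl(\partial_i\partial_jf-\frac{3}{2f}\,\partial_if\,\partial_jf\Bigr),$$
which is not zero in general: for $f=e^{x^1}$ it equals $\frac14 q_\alpha\delta_i^1\delta_j^1$. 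So for Euclidean bases and generic nonconstant $f$ the mixed derivative $\nabla_{\delta^*/\delta^*x^i}\,\delta^*/\delta^*z^\alpha$ retains a vertical part and $HT^*M$ is \emph{not} totally geodesic; the ``if'' direction holds only when $f$ satisfies the extra PDE $\partial_i\partial_jf=\frac{3}{2f}\partial_if\partial_jf$, i.e. $\mathrm{Hess}(f^{-1/2})=0$. The forward decoupling fails for a dual reason: for Riemannian bases one finds
$$\mathbf{R}_{\alpha\beta\gamma}=R^{(2)}_{\alpha\beta\gamma}+\frac{g^{kl}\partial_kf\,\partial_lf}{4f^3}\bigl(q_\alpha g_{\beta\gamma}-q_\beta g_{\alpha\gamma}\bigr),$$
where $R^{(2)}_{\alpha\beta\gamma}$ is the curvature of $(M_2,g_{\alpha\beta})$. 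Both terms are linear in $q$, so they are not built from ``disjoint data'': they can and do cancel, precisely when $g^{kl}\partial_kf\partial_lf/4f^3$ is a constant $c$, in which case $\mathbf{R}_{\alpha\beta\gamma}=0$ forces $M_2$ to have constant curvature determined by $c$, not to be flat (the polar-coordinate phenomenon: a flat warped product may have a round sphere as fibre). Read literally, your ``must vanish separately'' would instead force $\nabla f=0$, contradicting the nonconstancy hypothesis. So the conditions $g_{abc}=\mathbf{R}_{abc}=0$ are not equivalent to both bases being Euclidean; they characterize $M_1$ flat, $M_2$ of constant curvature, and $f^{-1/2}$ affine. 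Your attempt is a faithful expansion of the paper's two-line argument, but the verification that both you and the paper defer is exactly where the proof---and the theorem as stated---breaks down.
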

\begin{proof}
Suppose that $HT^*M$ is a totally geodesic distribution, then
$$\nabla_{\frac{\delta^*}{\delta^*x^i}}\frac{\delta^*}{\delta^*x^j},\ \nabla_{\frac{\delta^*}{\delta^*x^i}}\frac{\delta^*}{\delta^*u^{\alpha}},\ \nabla_{
\frac{\delta^*}{\delta^*u^{\alpha}}}\frac{\delta^*}{\delta^*x^i},\ \nabla_{\frac{\delta^*}{\delta^*u^{\alpha}}}\frac{\delta^*}{\delta^*u^{\beta}}\in\Gamma(HT^*M).$$
From Eq.~(\ref{levi-civita1}), above conditions are hold if and only if
$$\mathbf{R}_{abc}=g_{abc}=0,$$
These equations are equivalent to $(M,H)$ is an Euclidean space (the Pseudo-Riemannian space with zero curvature).
\end{proof}

Combining the theorems~\ref{bundle like} and~\ref{tot. geo.}, we have the following corollary.
\begin{cor}
Let the warped product Hamiltonian space $(M,H)$ be a pseudo-Riemannian manifold with nonconstant warped function $f$, then the vertical distribution $VT^*M$ is totally geodesic and metric $G$ is bundle-like for $VT^*M$.
\end{cor}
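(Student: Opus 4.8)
The plan is to deduce both conclusions from the two preceding theorems by showing that the Riemannian hypothesis on $(M,H)$ forces the equivalent conditions appearing there. First I would unpack what it means for $(M,H)$ to be a Riemannian manifold: the Hamiltonian $H$ is quadratic in the momenta, so the metric $(g^{ab})$ of~(\ref{metric2}) is independent of $\mathbf{p}$, equivalently the Cartan-type tensor $g^{abc}=\bar{\partial}^ag^{bc}$ vanishes identically. Lowering all indices as in the definitions following~(\ref{levi-civita1}) then gives $g_{abc}=0$; in particular $g_{ijk}=g_{\alpha\beta\gamma}=0$.

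For the bundle-like assertion, I would observe that because $H=H_1+fH_2$ with $f$ depending only on $x$, the blocks $g^{ij}=\frac{1}{2}\dot{\partial}^i\dot{\partial}^jH_1$ and $fg^{\alpha\beta}$ of~(\ref{metric2}) are momentum-independent exactly when $(M,H)$ is Riemannian, hence exactly when $g^{\alpha\beta}=\frac{1}{2}\dot{\partial}^{\alpha}\dot{\partial}^{\beta}H_2$ is as well. Thus $(M_1,(g_{ij}))$ and $(M_2,(g_{\alpha\beta}))$ are Riemannian manifolds, and Theorem~\ref{bundle like} (whose nonconstant-$f$ hypothesis is inherited) immediately yields that $G$ is bundle-like for $VT^*M$. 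This is also directly visible as the condition $g_{ijk}=g_{\alpha\beta\gamma}=0$ isolated in the proof of that theorem.

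For the totally geodesic assertion, I would invoke Theorem~\ref{tot. geo.} and verify that a Riemannian $(M,H)$ is a Landsberg-Hamilton space, i.e. that the tensor $g_{ab|_*c}$ defined there vanishes. Since $g_{ab}$ is momentum-independent, the vertical correction in the horizontal derivative drops out and $\frac{\delta^*g_{ab}}{\delta^*\mathbf{x}^c}$ reduces to the ordinary partial derivative $\partial g_{ab}/\partial\mathbf{x}^c$; simultaneously the canonical nonlinear connection $\bar{N}_{dc}$ of~(\ref{nonlinear}) becomes linear in $\mathbf{p}$, so the terms $g^{bd}\dot{\partial}^a(\bar{N}_{dc})+g^{ad}\dot{\partial}^b(\bar{N}_{dc})$ collapse to Christoffel-symbol contractions. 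The resulting combination is precisely the metric compatibility of the Levi-Civita connection of $(g_{ab})$, so $g_{ab|_*c}=0$, and Theorem~\ref{tot. geo.} then gives that $VT^*M$ is totally geodesic.

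I expect the third paragraph to be the genuine obstacle: while the vanishing of $g_{abc}$ for a Riemannian space is immediate, checking that the specific combination $g_{ab|_*c}$ really reduces to $\nabla_cg_{ab}=0$ requires confirming that $\dot{\partial}^a\bar{N}_{dc}$ reproduces the Levi-Civita Christoffel symbols of $(g_{ab})$ in the Riemannian case. This is the one point where the explicit form~(\ref{nonlinear}) of the canonical nonlinear connection must be used rather than merely its momentum-independence.
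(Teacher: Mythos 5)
Your proposal is correct and takes essentially the same route as the paper, whose entire proof is to ``gather'' Theorems~\ref{bundle like} and~\ref{tot. geo.}: you check that the Riemannian hypothesis forces $(M_1,(g_{ij}))$ and $(M_2,(g_{\alpha\beta}))$ to be Riemannian (so Theorem~\ref{bundle like} gives the bundle-like property) and forces $(M,H)$ to be Landsberg--Hamilton (so Theorem~\ref{tot. geo.} gives that $VT^*M$ is totally geodesic). Your write-up simply supplies the verifications of the two theorems' hypotheses that the paper leaves implicit.
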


%%%%%%%%%%%%%%%%%%%%%%%%%%%%%%%%%%%%%%%%%%%%%%%%%%%%%%%%%%%%%%%%%%%%%%%%%%%%%%%%%%%%%%%%%%%%%%%%%%%%%%%%%%%%%
\bibliographystyle{elsarticle-num}

\end{document}